\documentclass{amsart}[12pt]
\parskip=12pt

\setlength\parindent{0pt}
\setlength{\textwidth}{6in}
\setlength{\oddsidemargin}{0.25in}
\setlength{\evensidemargin}{0.25in}
\setlength{\topmargin}{0in}
\setlength{\textheight}{8.5in}

\usepackage{epsfig,graphicx,young,amsmath, amssymb, amsfonts,fancyhdr,hyperref}
\usepackage{lipsum}
\usepackage{lineno}

\pagestyle{fancy} 
\fancyhead[C]{}
\fancyhead[R]{}
\fancyhead[L]{}
\cfoot{\thepage}
\footskip=20pt

\fancypagestyle{titlepage}{
\fancyhead[R]{}
\fancyhead[CL]{}
\cfoot{\thepage}
\footskip=20pt
}

\def\ack{\section*{\bf Acknowledgements}}

\newtheoremstyle{theorem}
  {12pt}		  
  {0pt}  
  {\sl}  
  {\parindent}     
  {\bf}  
  {. }    
  { }    
  {}     
\theoremstyle{theorem}
\newtheorem{thm}{Theorem}  
\newtheorem{lemma}[thm]{Lemma}     

\theoremstyle{definition}


\newcommand{\lam}{\lambda}
\newcommand{\cA}{\mathcal{A}}
\newcommand{\cB}{\mathcal{B}}

\newcommand{\Section}[1]{\section{\bf #1}}

\makeatletter
\renewcommand*\@maketitle{%
  \normalfont\normalsize
  \@adminfootnotes
  \@mkboth{\@nx\shortauthors}{\@nx\shorttitle}%
  \global\topskip42\p@\relax 
  \@settitle
  \ifx\@empty\authors \else {\vskip 1em \vtop{\centering\shortauthors\@@par}} \fi
  \ifx\@empty\@date \else {\vskip 1em \vtop{\centering\@date\@@par}}\fi
  \ifx\@empty\@dedicatory
  \else
    \baselineskip18\p@
    \vtop{\centering{\footnotesize\itshape\@dedicatory\@@par}%
      \global\dimen@i\prevdepth}\prevdepth\dimen@i
  \fi
  \@setabstract
  \normalsize
  \if@titlepage
    \newpage
  \else
    \dimen@34\p@ \advance\dimen@-\baselineskip
    \vskip\dimen@\relax
  \fi
} 
\renewcommand*\@adminfootnotes{%
  \let\@makefnmark\relax  \let\@thefnmark\relax
  \ifx\@empty\@subjclass\else \@footnotetext{\@setsubjclass}\fi
  \ifx\@empty\@keywords\else \@footnotetext{\@setkeywords}\fi
  \ifx\@empty\thankses\else \@footnotetext{%
    \def\par{\let\par\@par}\@setthanks}%
  \fi
}

\makeatother

\begin{document}

\title{A Sidon-type condition on set systems}

\author{Peter J.~Dukes and Jane Wodlinger}
\address{\rm
Mathematics and Statistics, 
University of Victoria, Victoria, Canada
}
\email{dukes@uvic.ca, jw@uvic.ca}

\thanks{Ths research is supported by NSERC}


\date{\today}

\begin{abstract}
Consider families of $k$-subsets (or blocks) on a ground set of size $v$.  Recall that if all $t$-subsets occur with 
the same frequency $\lambda$, one obtains a $t$-design with index $\lam$.  
On the other hand, if all $t$-subsets occur with different frequencies, such a family has been called (by Sarvate and others) a $t$-adesign.  
An elementary observation shows that such families always exist for $v > k \ge t$.  Here, we study the smallest possible maximum frequency $\mu=\mu(t,k,v)$.

The exact value of $\mu$ is noted for $t=1$ and an upper bound (best possible up to a constant multiple) is obtained for $t=2$ using PBD closure.  Weaker, yet still reasonable asymptotic bounds on $\mu$ for higher $t$ follow from a probabilistic argument.  Some connections are made with the famous Sidon problem of additive number theory.
\end{abstract}

\maketitle
\thispagestyle{titlepage}

\hrule

\Section{Introduction}

Given a family (which may contain repetition) $\cA$ of subsets of a ground set $X$, the {\em frequency} of a set $T \subset X$ 
is the number of elements of $\cA$ (counting multiplicity) which contain $T$.  

Let $v \ge k \ge t$ be nonnegative integers.  A $t$-\emph{design}, or $S_\lam(t,k,v)$,  is a pair $(V,\cB)$ where $\cB$ is a family of $k$-subsets of $V$ such that every $t$-subset has the same frequency $\lam$.  Typically, $V$ is called a set of \emph{points}, $\cB$ are the \emph{blocks}, $t$ is the \emph{strength} (reflecting that $t$-designs are also $i$-designs for $i \le t$) and $\lam$ is the \emph{index}.  Repeated blocks are normally permitted in the definition.

There are `divisibility' restrictions on the parameters $v,k,t,\lam$ and beyond that very little is known in general about the existence of $S_\lam(t,k,v)$.  There are some trivial cases, such as $t=0$, $t=k$ or $k=v$, and some mildly interesting ones: $\lam=t=1$ leads to uniform partitions; $\lam=\binom{v-t}{k-t}$ is realized via the complete $k$-uniform hypergraph of order $v$.  For $t=2$ and fixed $k$ there is a rich and deep asymptotic existence theory due to R.M.~Wilson; see \cite{W}.  Spherical geometries and Hadamard matrices lead to some examples for $t=3$.

In \cite{SB1}, Sarvate and Beam consider an interesting twist on the definition.  A $t$-{\em adesign} is defined as a pair $(V,\cA)$, where $V$ is a ground set of $v$ 
points and $\cA$ is a collection of blocks of size $k$, satisfying the 
condition that every $t$-subset of points has a {\bf different} frequency.

Here, we abbreviate a $t$-adesign with $A(t,k,v)$.  It is easy to 
see that such families always exist for integers $v>k \ge t \ge 1$: simply assign multiplicities to $\binom{V}{k}$ according to different powers of two.

This begs a more intricate question.
Let $\mu(t,k,v)$ denote the smallest maximum frequency, taken over all 
adesigns $A(t,k,v)$.  The main question motivating this article is the 
following.

{\bf Problem.}
Given $t,k,v$, determine (or bound) $\mu(t,k,v)$.

In most of the previous investigations on adesigns, the cases of interest have been for $t=2$ and when the different pairwise frequencies are $1,2,\dots,\binom{v}{2}$.  It should be noted that here we allow zero as a frequency, although if desired it is not hard to bump up all frequencies to be positive.

From the definitions and easy observations above, we have
\begin{equation}
\label{basic-ineq}
\binom{v}{t}-1 \le \mu(t,k,v) < 2^{\binom{v}{k}}.
\end{equation}
However, the basic upper bound in (\ref{basic-ineq}) is unsatisfactory, at least asymptotically in $v$.  Our main goal is a substantial reduction of the upper bound (to something independent of $k$).

\begin{thm}
\label{main-t}
For $k > 2t+2$ and sufficiently large $v$, $$\mu(t,k,v) \le 16 t v^{2t+2} \log v.$$
\end{thm}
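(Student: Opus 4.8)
The plan is to use a probabilistic argument: choose a random collection of blocks with a cleverly chosen (random) multiplicity distribution, and show that with positive probability all $\binom{v}{t}$ of the $t$-subset frequencies are distinct while the maximum frequency stays below the target bound. The key realization is that if we assign to each $k$-block an independent random multiplicity, then for a fixed pair of distinct $t$-sets $T, T'$, the difference of their frequencies is a sum of independent contributions — one from each block — and the blocks containing $T$ but not $T'$ (and vice versa) give a nontrivial "signed" sum whose distribution is spread out (anticoncentrated). The condition $k > 2t+2$ should guarantee there are enough such blocks, and in fact enough blocks meeting $T \cup T'$ in a controlled way, to make the Littlewood–Offord / Erdős-type anticoncentration estimate work.

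Concretely, I would proceed as follows. First, fix a parameter $N = \Theta(t v^{2t+2} \log v)$ and let each block $B \in \binom{V}{k}$ receive multiplicity $x_B$, where the $x_B$ are i.i.d.\ uniform on $\{0, 1, \dots, m\}$ for a suitable $m$; actually a cleaner route is to let $x_B \in \{0,1\}$ with probability $p$ chosen so that expected frequencies are around $N/2$. Second, for a fixed ordered pair $(T,T')$ of distinct $t$-sets, write the frequency difference as $f(T) - f(T') = \sum_B (\mathbf{1}[T \subseteq B] - \mathbf{1}[T' \subseteq B]) x_B$; the blocks $B$ with $T \subseteq B, T' \not\subseteq B$ contribute $+x_B$ and those with $T' \subseteq B, T \not\subseteq B$ contribute $-x_B$, while blocks containing both or neither contribute $0$. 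Since $|T \cup T'| \le 2t < k$, and in fact $|T \cup T'| \le 2t < k-2$, the number of blocks containing $T$ but not $T'$ is $\binom{v - |T \cup T'|}{k - t} - \binom{v - |T\cup T'|}{k-t} \cdot(\text{correction})$, which is $\Theta(v^{k-t})$ — a huge number. Third, apply a Littlewood–Offord-type anticoncentration bound: a sum of $r$ independent $\pm$-weighted Bernoulli variables takes any particular value with probability $O(1/\sqrt{r})$. With $r = \Theta(v^{k-t})$ this is $o(v^{-2t})$, so a union bound over all $\binom{v}{t}^2 = O(v^{2t})$ ordered pairs shows that with high probability all frequencies are distinct. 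Fourth, control the maximum frequency: each $f(T)$ is a sum of at most $\binom{v-t}{k-t}$ Bernoulli$(p)$ variables, so choosing $p$ so that $p\binom{v-t}{k-t} \approx N/2$ and applying a Chernoff bound keeps every $f(T) \le N$ with high probability. Taking a union of the two failure events gives a valid $A(t,k,v)$ with maximum frequency at most $N \le 16 t v^{2t+2}\log v$ for large $v$.

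The main obstacle, I expect, is making the anticoncentration step clean enough to beat the union bound with the stated constant $16$, and in particular verifying that the number of blocks in the "symmetric difference" class — those containing exactly one of $T, T'$ — really is large for every pair $(T, T')$ including the worst case $|T \cup T'| = 2t$; this is exactly where the hypothesis $k > 2t + 2$ is consumed (one needs $k - t$ and a bit of room to spare so that $\binom{v - 2t}{k-t-1}$-type counts are genuinely of order $v^{k-t-1}$ or larger, and not cancelled). A secondary technical point is that the crude Littlewood–Offord bound $O(1/\sqrt r)$ may need to be replaced by a slightly stronger statement (e.g.\ using that the $\pm 1$ coefficients are all equal in absolute value, so the sum is a shifted binomial and one can use a local central limit estimate) to get the logarithmic factor and the explicit constant to come out as claimed. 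I would also need to double-check the bookkeeping that lets the final bound be stated independently of $k$: the point is that although the number of blocks grows with $k$, we only ever use a $p$ that scales the expected frequency to $N/2$, so $k$ disappears from the final answer.
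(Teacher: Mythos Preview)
Your anticoncentration step does not deliver the bound you claim. With i.i.d.\ Bernoulli$(p)$ multiplicities and $p$ chosen so that the expected frequency $p\binom{v-t}{k-t}$ is about $N/2$, the difference $f(T)-f(T')$ is the difference of two independent Binomial$(r,p)$ variables, where $r$ counts blocks containing exactly one of $T,T'$. The maximum point mass of a Binomial$(r,p)$ is of order $1/\sqrt{rp(1-p)}$, not $1/\sqrt{r}$; since $r$ is a constant fraction of $\binom{v-t}{k-t}$ in the worst case $|T\cup T'|=t+1$, one has $rp=\Theta(N)$ and hence only $\Pr[f(T)=f(T')]=O(1/\sqrt{N})$. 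The union bound over roughly $v^{2t}$ pairs then forces $N\gg v^{4t}$, not $v^{2t+2}$. Put differently: a symmetric random family gives every $t$-set frequency the \emph{same} mean, so the only separation available is the fluctuation scale $\sqrt{N}$, and you need $\sqrt{N}$ to dominate $v^{2t}$. Your proposed role for the hypothesis $k>2t+2$ (guaranteeing many symmetric-difference blocks) is also off: that count is $\Theta(v^{k-t})$ already for $k>t$, and is not the bottleneck.

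The paper's argument supplies exactly the missing ingredient. It first builds a \emph{deterministic} multiset $\cB$ whose $t$-set frequencies are already distinct and well spaced: take $V$ to be a Bose--Chowla $B_t$-sequence of size $v$ inside $[v^t]$ and give each $k$-set $K$ multiplicity $f(K)=\sum_{m\in V\setminus K}m$. Then each $t$-set $T$ has frequency $f(T)=\binom{v-t-1}{k-t}\sum_{m\notin T}m$, and the Sidon property makes the sums $\sum_{m\notin T}m$ distinct integers, so consecutive $f(T)$ values differ by at least $\binom{v-t-1}{k-t}$. Only then does one sample each copy in $\cB$ independently with probability $p$ and apply a two-sided Chernoff bound (concentration, not anticoncentration) to pin every sampled frequency within $2\sqrt{pf(T)\log\binom{v}{t}}$ of $pf(T)$. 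Distinctness survives provided $16v^{t+1}\log\binom{v}{t}<p\binom{v-t-1}{k-t}$, and the hypothesis $k>2t+2$ is exactly what allows such a $p<1$; the resulting maximum frequency is $O(v^{2t+2}\log v)$. So the idea you are missing is the Sidon-type pre-separation of the means; without it the exponent essentially doubles.
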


The constant is surely not best possible; however, we are content until more is known about the exponent.  

We can do much better when $t \le 2$.  For $t=1$, an elementary argument gives the exact value of $\mu$.  And for $t=2$, Wilson's theory of PBD closure reduces the upper bound on $\mu$ to a constant multiple of its lower bound.

\begin{thm}
For positive integers $v>k$,
\label{main-1}
$$\mu(1,k,v)=
\begin{cases}
v-1 & \text{if}~ 2k \le v ~\text{and}~\binom{v}{2} \equiv 0 \pmod{k}, \cr
v & \text{if}~ 2k \le v ~\text{and}~\binom{v}{2} \not \equiv 0 \pmod{k}, \cr
\left\lceil \frac{1}{v-k} \binom{v}{2} \right\rceil & \text{if}~ 2k>v.
\end{cases}$$
\end{thm}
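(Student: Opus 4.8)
The plan is to recognize an $A(1,k,v)$ as nothing but a multiset $\cA$ of $k$-subsets of the ground set $V$ whose $v$ point-frequencies $f(x)$ are pairwise distinct nonnegative integers, so that $\mu(1,k,v)$ is the least possible value of $\max_x f(x)$ over such multisets. The first step is therefore a realizability lemma: a weakly decreasing sequence of nonnegative integers $d_1 \ge d_2 \ge \cdots \ge d_v$ is the frequency sequence of some multiset of $k$-subsets of a $v$-set if and only if $k \mid \sum_i d_i$ and $d_1 \le \tfrac1k\sum_i d_i$. The forward direction is clear, since a point lies in at most all $b := \tfrac1k\sum_i d_i$ blocks; for the converse I would apply the Gale--Ryser theorem to the point-by-block $0/1$ incidence matrix, whose column sums are all equal to $k$ (repeated blocks just give repeated columns, which is harmless). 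The Gale--Ryser inequality $\sum_{i\le\ell} d_i \le b\min(k,\ell)$ is automatic for $\ell \ge k$, and for $\ell < k$ it follows from $d_1 \le b$.

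With the lemma in hand the lower bound is short. Any $v$ distinct nonnegative integers have maximum at least $v-1$, so $\mu \ge v-1$ in every case, and $\mu = v-1$ is possible only when the frequencies are exactly $\{0,1,\dots,v-1\}$, which forces $k \mid \binom v2$. When $2k>v$ I would pass to complements: $\overline{\cA} = \{V\setminus B : B\in\cA\}$ is a multiset of $(v-k)$-sets whose co-frequencies $g(x) = b - f(x)$ are again $v$ distinct nonnegative integers, now summing to $(v-k)b$. Writing $m = \min_x g(x)$, one has $(v-k)b = \sum_x g(x) \ge vm + \binom v2$, and substituting into $\mu = b - m$ gives $\mu \ge \tfrac1{v-k}\binom v2$, hence $\mu \ge \lceil \binom v2/(v-k)\rceil$.

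For the matching constructions I would, in each regime, write down an explicit target (co-)frequency sequence of $v$ distinct nonnegative integers with the right maximum and the right sum, then check the lemma's hypothesis $d_1\le b$. If $2k\le v$ and $k\mid\binom v2$: use frequencies $\{0,1,\dots,v-1\}$ with $b=\binom v2/k$; here $b\ge v-1$ is exactly the inequality $2k\le v$, so $\mu=v-1$. If $2k\le v$ and $k\nmid\binom v2$: use frequencies $\{0,1,\dots,v\}\setminus\{j\}$, where $j\in\{0,\dots,k-1\}$ is chosen with $\binom{v+1}2-j\equiv 0\pmod k$; then $kb=\binom{v+1}2-j$ satisfies $b\ge v$ (this is where $2k\le v$ is used again, via $\binom{v+1}2-(k-1)\ge kv$), and $j\ne v$ automatically, so $\mu=v$. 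If $2k>v$: set $b=M:=\lceil\binom v2/(v-k)\rceil$ and take as co-frequencies the set obtained from $\{0,1,\dots,v-1\}$ by adding $1$ to its top $r$ elements, where $r=(v-k)M-\binom v2\in\{0,\dots,v-k-1\}$; this has sum $(v-k)M$ and maximum at most $v$, and since $2(v-k)<v$ forces $M\ge v$, the lemma with block size $v-k$ produces $\overline{\cA}$, whence $\cA$ has the distinct frequencies $M-g(x)$ with maximum $M$. In each case all frequencies produced are nonnegative.

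The step I expect to be the real obstacle is verifying the hypothesis $d_1\le b$ of the realizability lemma for the extremal constructions: this is what pins the value of $\mu$ down exactly, it is tight (e.g.\ at $v=2k$ one has $b=v-1=d_1$), and it is what forces both the case split and the careful choice of the deleted value $j$. Everything else — the divisibility bookkeeping, the counting, and checking that the listed sequences really are sets of distinct integers with the claimed extreme element — is routine.
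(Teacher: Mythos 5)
Your proposal is correct and its overall architecture is the same as the paper's: both reduce the theorem to realizing a prescribed sequence of $v$ distinct point-frequencies, with the realizability criterion being precisely that $k$ divides the total and the largest frequency does not exceed the number of blocks $b$, and both then feed in runs of (almost) consecutive integers and match them against the same counting lower bounds. The one substantive difference is how that realizability criterion is established. The paper proves it by an explicit construction: lay the points into $b$ blocks cyclically, element $x$ occupying the next $f(x)$ positions modulo $b$, which produces genuine $k$-subsets exactly when $\max_x f(x)\le b$. You instead invoke Gale--Ryser on the point-by-block incidence matrix with constant column sums $k$, observing that all the dominance inequalities collapse to $d_1\le b$. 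Both are sound; the cyclic filling is self-contained and constructive, while Gale--Ryser makes the ``if and only if'' character of the criterion transparent and spares one from checking by hand that no point repeats within a block. Your complementation trick in the case $2k>v$ (building the $(v-k)$-uniform complement system with co-frequencies $g(x)=b-f(x)$ and translating back) is likewise a tidy equivalent of the paper's direct construction with frequencies running from $\mu-v$ up to $\mu$, with the minor advantage of reusing the $2k\le v$ template verbatim; your verification that $M\ge v$ (from $2(v-k)<v$) plays exactly the role of the paper's check that $\mu\le b$.
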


\begin{thm}
\label{main-2}
There is a constant $C=C(k)$ such that 
$\mu(2,k,v) \le C v^2.$
\end{thm}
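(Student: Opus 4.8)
The plan is to invoke Wilson's theory of pairwise balanced designs to reduce the construction of $A(2,k,v)$ for large $v$ to a finite stock of ``ingredient'' adesigns on boundedly many points. Recall that a PBD$(v,K)$ is a family $\cB$ of blocks on a $v$-set, each block size lying in $K$, such that every pair of points lies in exactly one block. The feature to exploit is that such a design \emph{partitions the pairs}: each pair is governed by a unique block. Consequently, if on every block $B$ of a PBD$(v,K)$ we install a family of $k$-subsets of $B$ whose internal pair-frequencies are all distinct and all lie in an interval $I_B$, and if the $I_B$ are pairwise disjoint as $B$ runs over $\cB$, then the union of all these local families is an adesign $A(2,k,v)$, with maximum frequency at most $\sum_{B\in\cB}|I_B|$. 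So it suffices to make each $|I_B|$ a constant and to bound the number of blocks.

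First I would fix a finite set $K$ of integers, each exceeding $k$ (so that adesigns with block size $k$ exist on $\kappa$ points for $\kappa\in K$), chosen so that the PBD-closure $B(K)$ contains every sufficiently large integer. By Wilson's theorem \cite{W} it is enough that $\gcd\{\kappa-1:\kappa\in K\}=1$ and $\gcd\{\kappa(\kappa-1):\kappa\in K\}=2$ --- the latter gcd being automatically even --- since then the necessary congruence conditions for a PBD on $v$ points become vacuous. Such a $K$ exists: take an odd $\kappa_1>k$ and $\kappa_2=\kappa_1+1$, so that $\gcd(\kappa_1-1,\kappa_2-1)=1$ and $\gcd(\kappa_1(\kappa_1-1),\kappa_2(\kappa_2-1))=2\kappa_1$, and then for each odd prime $p\mid\kappa_1$ adjoin some $\kappa>k$ with $\kappa\not\equiv 0,1\pmod p$ to strip $p$ from the second gcd without disturbing the first. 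For each $\kappa\in K$ fix, once and for all, an adesign $A(2,k,\kappa)$ (these exist by the powers-of-two observation in the introduction); since $K$ is finite, $M:=\max_{\kappa\in K}(\text{largest pair-frequency of the chosen }A(2,k,\kappa))$ is a constant depending only on $k$, whatever adesigns we pick. Set also $\Lambda:=\operatorname{lcm}\{\binom{\kappa-2}{k-2}:\kappa\in K\}$ and $s:=\Lambda\lceil (M+1)/\Lambda\rceil$, again constants.

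Now take $v$ large enough that a PBD$(v,K)$ exists, with blocks $B_1,\dots,B_N$; since $\binom{|B_j|}{2}\ge\binom{k+1}{2}\ge 1$ for each $j$ and these sum to $\binom v2$, we have $N=O(v^2)$. On block $B_j$, of size $\kappa_j$, install one copy of the chosen adesign $A(2,k,\kappa_j)$ together with $(j-1)s\big/\binom{\kappa_j-2}{k-2}$ copies of the complete $k$-uniform hypergraph on $B_j$, noting that $\binom{\kappa_j-2}{k-2}\mid\Lambda\mid s$ so this count is an integer, and that the complete $k$-uniform hypergraph on $B_j$ is an $S_\lambda(2,k,\kappa_j)$ with $\lambda=\binom{\kappa_j-2}{k-2}$. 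The second ingredient raises every internal pair-frequency of $B_j$ by exactly $(j-1)s$, so the internal pair-frequencies of $B_j$ are $\binom{\kappa_j}{2}$ distinct integers lying in $I_j=[(j-1)s,(j-1)s+M]$. Since $s>M$ the intervals $I_1,\dots,I_N$ are pairwise disjoint, and since each pair of the $v$-set lies in exactly one $B_j$ (whose blocks are $k$-subsets of $B_j\subseteq[v]$), the union of all the local families is an adesign $A(2,k,v)$ with maximum pair-frequency at most $(N-1)s+M=O(v^2)$. This gives $\mu(2,k,v)\le C(k)v^2$ for all large $v$; the finitely many smaller $v>k$ are bounded by a constant depending on $k$, and each corresponding $\mu(2,k,v)$ is finite, so enlarging $C$ handles them.

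The part requiring genuine care is the selection of $K$: it must avoid the values $\kappa\le k$, have the two gcd's equal to $1$ and $2$ so that Wilson's theorem delivers a PBD$(v,K)$ for \emph{every} large $v$, and still keep the number of blocks --- hence the total accumulated shift --- of order $v^2$. Granting Wilson's PBD existence theorem, the remaining moves (an ingredient adesign plus a scaled complete design on each block, and the observation that one interval per block suffices for global distinctness) are routine bookkeeping. I expect a sharper constant could be extracted by using Sidon-set--based ingredient adesigns in place of the crude powers-of-two ones, but this is not needed for the stated bound.
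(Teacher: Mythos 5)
Your proposal is correct and follows essentially the same route as the paper: it is Lemma~\ref{pbd-cl} (PBD closure for adesigns, with an $S_\lambda(2,k,u)$ on each block to shift its pair-frequency interval past all previous ones) combined with Wilson's theorem and the $O(v^2)$ bound on the number of blocks. The only cosmetic differences are that you re-derive the lemma inline, use the complete $k$-uniform hypergraph as the spreading design where the paper uses $\lambda_{\rm min}$, and build $K$ by a gcd-stripping argument where the paper simply takes $K=\{k+1,k+2,k+3\}$.
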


The proof of Theorem~\ref{main-t} follows a probabilistic argument and occurs in Section 2.  The proofs of Theorems~\ref{main-1} and \ref{main-2} are given in Section 3.  

Before beginning our detailed investigations, we should mention some connections with a central topic in additive combinatorics.  Briefly, a {\em Sidon sequence} (or \emph{Golomb ruler}) is a list of positive integers whose pairwise sums are all distinct, up to swapping summands.  More generally, a $B_r$-\emph{sequence}
or \emph{Sidon sequence of order} $r$ has the property that all its $r$-wise sums are distinct.
It is known (see \cite{Sidon-survey}) that the largest cardinality $F_r(n)$ of a Sidon sequence of order $r$ contained in $[n]$ satisfies
\begin{equation}
\label{sidon-bounds}
n^{1/r} (1-o(1)) \le F_r(n) \le  C(r) n^{1/r}.
\end{equation}  

Now consider an adesign $A(t,v-1,v)$, where $V$ is the ground set of size $v=k+1$.  
Assign multiplicity $f(x)$, chosen from a Sidon sequence of order $t$, to the `co-singleton' set $V \setminus \{x\}$, $x \in V$.  The inherited weight on a $t$-subset $T$ is $\sum_{x \not\in T} f(x)$.  By construction, this takes distinct values on all $t$-subsets.  From this and (\ref{sidon-bounds}), we see that $\mu(t,v-1,v) \le C v^t$, which is best possible up to a constant multiple.
However, it is also clear that the exact determination of $\mu$, even in the case $v=k+1$, is as difficult as the Sidon problem.

\Section{The general bound}

We prove Theorem~\ref{main-t} by employing $B_r$-sequences along the lines of the discussion concluding Section 1.  But here, a probabilistic selection is needed to control the upper bound on $\mu$.

\begin{proof}[Proof of Theorem~\ref{main-t}]
Assume $t > 1$, appealing to Theorem~\ref{main-1}.  Suppose first that $v$ is a prime power.  Bose and Chowla \cite{BC} construct a $B_t$-sequence of size $v$ in $[v^t]$.  Let $V$ be such a set of integers 
and consider the family $\cB$ of all $k$-subsets of $V$, where a $k$-set $K$ is taken with multiplicity
$$f(K) = \sum_{m \in V \setminus K} m.$$  Then the frequency of a $t$-subset $T$ in $\cB$ is 
\begin{equation}
\label{freq-tsub}
f(T) = \sum_{K \supseteq T, |K|=k} f(K) = \binom{v-t-1}{k-t} \sum_{m \in V \setminus T} m.
\end{equation}
By choice of $V$, these are all distinct frequencies.  Observe that $\sum_{m \in V \setminus T} m < v^t(v-t)$, so that $f(T)$ is at most a polynomial of order $v^{k+1}$.

Consider next a family $\cA=\cA(p)$ consisting of each element of $\cB$ chosen independently with probability $p$.  We claim there is some $p$ guaranteeing an adesign $A(t,k,v)$ of the required form.

Let $f_\cA(T)$ denote the frequency of $T$ in $\cA$.  This is a sum of $f(T)$ independent binomial random variables $X_i$, one for each $k$-set in $\cB$ containing $T$.  So $f_\cA(T)$ has expected value $\mu = p f(T)$ by linearity.  

Now let's invoke a (weak but tidy) two-sided Chernoff bound of the form
$$\mathbb{P}\left[ \left|\sum X_i - \mu \right|> 2 \sqrt{\mu \log 1/\epsilon} \right] < \epsilon,$$
which holds for $\epsilon > \exp(-\mu/4)$. 
Taking $\epsilon = \binom{v}{t}^{-1}$, we conclude that there exists (with positive probability) a family $\cA$ such that
\begin{equation}
\label{pinned-freqs}
|f_\cA(T)-pf(T)| < \sigma(T),
\end{equation}
for {\bf every} $t$-set $T$, where $\sigma(T):=2 \sqrt{p f(T) \log \binom{v}{t}}$, and for each $p$ with $p f(T) > 4 \log \binom{v}{t}$.

It remains to check that frequencies $f_\cA(T)$ remain distinct and appropriately bounded for some choice of $p$.  
By (\ref{freq-tsub}) and (\ref{pinned-freqs}), we have distinct frequencies provided that
$$2 \sigma(T) < p \binom{v-t-1}{k-t}.$$
Using the definition of $\sigma(T)$ and $\sum_{m \not\in T} m < v^{t} (v-t)$, it suffices to have
\begin{equation}
\label{est-p}
16 v^{t} (v-t) \log  \binom{v}{t}  < p \binom{v-t-1}{k-t}.
\end{equation}
The right side of (\ref{est-p}) grows faster than the left for $k \ge 2t+2$; hence, for sufficiently large $v$, we can choose $p = 16 v^{t+1} \log \binom{v}{t}/\binom{v-t-1}{k-t}<1$ (easily permitting application of the Chernoff bound above.)
 
For such a choice, we have
$$\max_T f_\cA(T) < p f(T) +  \sigma(T) < (16 v^{2t+2} +8 v^{t+1}) \log \binom{v}{t}.$$
The bound $\log \binom{v}{t} \le t \log v - \log t!$ leaves enough room to eliminate the lower-order term and imply the stated bound.

Finally, if $v$ is not a prime power, we can simply apply the above argument to a prime $v' \le v+o(v)$ to obtain asymptotically the same result.
\end{proof}

\Section{The cases $t=1$ and $t=2$}

When $t=1$, we  simply demand that every point is in a different number of blocks.  A complete characterization is possible here, following a technique known to Sarvate and Beam in early investigations.  To the best of our knowledge, though, Theorem~\ref{main-1} has not been worked out for general $v$ and $k$.

The proof strategy is as follows.  Suppose $f(1)<\cdots<f(v)$ are desired pointwise frequencies whose sum $F$ is divisible by $k$.  Set up $b=F/k$ blocks, and place element `1' in the first $f(1)$ blocks, element `2' in the next $f(2)$ blocks, and so on, with blocks identified modulo $b$.  In other words, the $i$th block contains those elements $x$ such that
$$\sum_{1 \le y< x} f(y) < bq+i \le \sum_{1 \le y \le x} f(y)$$
for some integer $q \in \{0,1,\dots,k-1\}$.  Care must be taken that the maximum frequency $f(v)$ does not exceed $b$, the number of blocks.  Ideally, the frequencies are chosen to be consecutive, or almost consecutive integers.

\begin{proof}[Proof of Theorem~\ref{main-1}]
We apply the above construction using a run of (almost) consecutive prescribed frequencies.  There is a division into two main cases.

\noindent
{\sc Case 1}: $2k \le v$.  

Suppose first that $k \mid \binom{v}{2}$.
Fill $b=\binom{v}{2}/k$ blocks with pointwise frequencies $0,1,\dots,v-1$.  Note that $b \ge v-1$ follows from the assumption $2k \le v$.  On the other hand, if $\binom{v}{2} = bk-r$, $0<r<k$, use $b$ blocks with frequencies $0,1,\dots,v-r-1,v-r+1,\dots,v$.  One has sum of frequencies $bk=\binom{v+1}{2}-(v-r)=\binom{v}{2}+r$, as required.  In either sub-case, the smallest possible maximum frequency is realized and we have
$$\mu(1,k,v) = 
\begin{cases}
v-1 & \text{if}~ \binom{v}{2} \equiv 0 \pmod{k}, \cr
v & \text{if}~ \binom{v}{2} \not \equiv 0 \pmod{k}.
\end{cases}
$$
\noindent
{\sc Case 2}:  $2k > v$.  We first show that the given value $\left\lceil \frac{1}{v-k} \binom{v}{2} \right\rceil$ is a lower bound on $\mu(1,k,v)$.  Suppose $m$ is the maximum frequency in an adesign $A(1,k,v)$.  Then 
$$mk \le bk \le (m-v+1)+\dots+(m-1)+m = mv-\binom{v}{2}.$$
In other words, $m$ is an integer with $m(v-k) \ge \binom{v}{2}$ and the lower bound follows.
Conversely, we must realize the given value $\mu:=\lceil \frac{1}{v-k} \binom{v}{2}\rceil$ as the maximum frequency in an adesign $A(1,k,v)$.  Put $bk=\mu v-\binom{v}{2}-r$, for some positive integer $b$ and $0 \le r < k$.
Again, use the strategy preceding the statement of the theorem, with 
$b=\frac{1}{k}(\mu v-\binom{v}{2}-r)$ blocks and frequencies 
$$\mu-v,\dots,\mu-v-r-1,\mu-v-r+1,\dots,\mu.$$  
It remains to check that $\mu \le b$.  However, this follows easily since $\mu$ is the least integer with 
$\mu(v-k) \ge \binom{v}{2}$.  Therefore, $\mu k \le \mu v-\binom{v}{2}$.  On the other hand, $b$ is the greatest integer so that $bk \le \mu v-\binom{v}{2}$. 
\end{proof}

We turn now to adesigns with $t=2$.  An important tool here is `PBD closure', which we briefly outline.  Let $K$ be a set of positive integers, each at least two.  A {\em pairwise balanced design} PBD$(v,K)$ is a set of $v$ points, together with a set of blocks whose sizes are in $K$, having the property that every unordered pair of different points is contained in exactly one block.  Wilson's theorem \cite{W} asserts that the necessary `global' and `local' divisibility conditions on $v$ given $K$ are asymptotically sufficient for the existence of PBD$(v,K)$.

A key observation for the proof of Wilson's theorem is the `breaking up blocks' construction: a block, say of size $u$, of a PBD can be replaced with the family of blocks of a PBD on $u$ points.  In particular, if there exists a PBD$(v,K)$ and an $S_\lam(2,k,u)$ for every $u \in K$, then there exists an $S_\lam(2,k,v)$.  

It was observed in \cite{DSG} that adesigns actually obey a similar recursion.  The basic idea is to place adesigns (instead of designs) on the blocks of a PBD.  However, each such adesign needs to be accompanied with a block design on those points with sufficiently large $\lam$ so as to `spread out' the pairwise frequencies.  When restated using $\mu$, one obtains the following result.

\begin{lemma}
\label{pbd-cl}
Suppose there exists a PBD$(v,K)$ with $b$ blocks having sizes $u_1$, $u_2$, $\dots$, $u_b$.
Put $M_0=0$ and for $0<i\le b$,
\begin{equation}
\label{spread}
M_i = \min\{\lam \ge M_{i-1} : \exists S_\lam(2,k,u_i) \} + \mu(2,k,u_i).
\end{equation}
Then $\mu(2,k,v) \le M_b$.
\end{lemma}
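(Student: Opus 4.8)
The plan is to build an adesign $A(2,k,v)$ by the ``breaking up blocks'' recursion, placing a near-optimal adesign on each block of the given PBD$(v,K)$ together with an auxiliary $2$-design on that block to shift its pairwise frequencies into a range disjoint from those used on the previous blocks. Process the blocks $B_1,\dots,B_b$ of the PBD in order, where $|B_i|=u_i$. Every pair of points of $V$ lies in exactly one block $B_i$, so the frequency in the final family of a pair inside $B_i$ is determined entirely by what we put on $B_i$; thus it suffices to ensure that the sets of frequencies realized on distinct blocks are pairwise disjoint, and then the overall frequencies will all be distinct.

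The construction on block $B_i$ is as follows. Let $\lambda_i = \min\{\lambda \ge M_{i-1} : \exists\, S_\lambda(2,k,u_i)\}$. First place the blocks of an $S_{\lambda_i}(2,k,u_i)$ on the $u_i$ points of $B_i$; this contributes a flat frequency of exactly $\lambda_i$ to every pair inside $B_i$. Then superimpose the blocks of an optimal adesign $A(2,k,u_i)$ on $B_i$, whose pairwise frequencies are distinct and all at most $\mu(2,k,u_i)$. The combined pairwise frequencies inside $B_i$ are then distinct and lie in the interval $[\lambda_i,\ \lambda_i+\mu(2,k,u_i)] = [\lambda_i, M_i]$. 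Since $\lambda_i \ge M_{i-1}$, these frequencies are at least $M_{i-1}$, while the frequencies used on all earlier blocks are at most $M_{i-1}$; hence the only possible collision is at the single value $M_{i-1}$, and that is avoided by noting $A(2,k,u_i)$ permits frequency $0$, so we may instead arrange its frequencies to be distinct positive integers, forcing the combined frequencies on $B_i$ to lie strictly above $\lambda_i \ge M_{i-1}$. Either way, the frequency sets across blocks are disjoint, the union family is an adesign $A(2,k,v)$, and its maximum frequency is at most $M_b$, giving $\mu(2,k,v) \le M_b$.

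A couple of routine points need checking. One must confirm that the minimum in \eqref{spread} is over a nonempty set, i.e.\ that some $S_\lambda(2,k,u_i)$ exists with $\lambda \ge M_{i-1}$; this is immediate because the complete $k$-uniform hypergraph on $u_i$ points is an $S_\lambda(2,k,u_i)$ with $\lambda = \binom{u_i-2}{k-2}$, and more generally any sufficiently large multiple works, so arbitrarily large indices are attainable and the minimum exists. One also uses that $u_i \ge k$ is needed for $S_\lambda(2,k,u_i)$ and $A(2,k,u_i)$ to make sense; if the PBD contains blocks of size smaller than $k$ the statement is vacuous or should be read with the convention that such blocks contribute nothing, but since we are only proving an upper bound this causes no difficulty — we may simply assume the relevant designs exist exactly when the quantities on the right-hand side of \eqref{spread} are finite.

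The main obstacle, such as it is, is not any single hard estimate but rather the bookkeeping needed to verify that frequencies on different blocks genuinely do not overlap while simultaneously keeping the maximum frequency controlled by the telescoping quantity $M_b$ — in particular, being careful that the ``flat'' shift $\lambda_i$ chosen for block $B_i$ is at least $M_{i-1}$ (so no pair inside $B_i$ gets a frequency already used) yet is chosen \emph{minimally} subject to that (so we do not waste room and blow past $M_i$). Once the recursion \eqref{spread} is set up to enforce exactly this, the proof is essentially a direct verification.
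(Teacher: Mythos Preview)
Your proof is correct and follows exactly the approach the paper sketches (and attributes to \cite{DSG}) in the paragraph preceding the lemma: on each block $B_i$ of the PBD, superimpose a $2$-design of index $\lambda_i \ge M_{i-1}$ with an optimal adesign $A(2,k,u_i)$, so that the pairwise frequencies on $B_i$ land in the fresh interval $[\lambda_i, M_i]$, disjoint from the intervals used on earlier blocks. The paper gives no further detail than that sketch, so your write-up is actually more complete; the endpoint collision at $M_{i-1}$ that you flag is a minor off-by-one inherent in the lemma's statement (your positivity fix could in principle raise the maximum by one), but it is immaterial for the application in Theorem~\ref{main-2}.
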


\noindent
\emph{Remark}.  The minimum in (\ref{spread}) is well defined; more generally, $S_\lam(t,k,v)$ exists for a smallest positive integer $\lam=\lam_{\rm min}(v,k) \le \binom{v-t}{k-t}$, and such designs can be repeated with arbitrary multiplicity.

We are now ready to prove the quadratic upper bound on $\mu(2,k,v)$.

\begin{proof}[Proof of Theorem~\ref{main-2}]
For $v$ large and $K=\{k+1,k+2,k+3\}$, apply Lemma~\ref{pbd-cl} to a PBD$(v,K)$.
Note that such PBDs exist for all sufficiently large integers $v$.  This follows easily from Wilson's theorem since the three consecutive block sizes lead to no `divisibility' restrictions on $v$ (globally, $\gcd\{k(k+1),(k+1)(k+2),(k+2)(k+3)\} = 2$ always divides $\binom{v}{2}$; locally, $\gcd\{k,k+1,k+2\}=1$ divides $v-1$).

Put $m = \max\{\mu(2,k,k+j):j=1,2,3\}$ and $l = \max\{\lam_{\rm min}(k+j,k):j=1,2,3\}$.  Observe $l$ and $m$ depend only on $k$.  Also, observe that the number of blocks $b$ of a PBD$(v,K)$ satisfies $b \le \binom{v}{2}/\binom{k+1}{2}$, since $k+1$ is the smallest block size.  Combining these facts, it follows that $$\mu(2,k,v) \le lmb \le C(k) \binom{v}{2}.\qedhere$$ 
\end{proof}

\Section{Discussion}

There is another noteworthy construction of $t$-adesigns by combining copies of systems which are nearly designs.  The general idea to work from a family $\widehat{\cB}_T$ of $k$-subsets such that one preferred $t$-subset $T$ has frequency $\lam_1$ and all other $t$-subsets have frequency $\lam_2 < \lam_1$.  (Such families can be found, for instance, via a linear algebraic argument upon `clearing denominators'.)  Then, take copies of $\widehat{\cB}_T$ with distinct multiplicities over each $T \in \binom{V}{t}$.  The crude bound obtained in this way is $\mu(t,k,v) \le C_1 \lam_1 v^t + C_2 \lam_2 v^{2t}$.  However, we presently see no way of keeping $\lam_2$ small enough in general.  This would be an interesting problem in its own right.  When such families $\widehat{\cB}_T$ exist with reasonable $\lam_2$, it is possible to improve Theorem~\ref{main-t}.

The remaining work for $t=2$ essentially amounts to a reduction in the multiplicative constant in Theorem~\ref{main-2}.  There are some ideas which seem promising in this direction.  For instance, $\mu(2,3,v)$  was completely determined in \cite{DSG} using a blend of  PBD closure, group divisible designs, and a variation on `anti-magic cubes'.  The latter concerns a neat side-problem: place nonnegative integers in the cells of the cube $[n]^3$ so that the $3n^2$ line sums are distinct and with maximum value as small as possible.  Interesting constructions yielding line sums $\{0,1,\dots,3n^2-1\}$ were found for $n=2,3,5,7$, and products of these values.

Returning to $\mu(2,3,v)$, a slightly technical argument shows that the maximum frequency for triples versus pairs in an adesign is best possible.

\begin{thm}[\cite{DSG}]
For all $v>3$, $$\mu(2,3,v) =\begin{cases}
\binom{v}{2},~\text{if}~v=4~\text{or}~v \equiv 2 \pmod{3},\\
\binom{v}{2}-1,~\text{otherwise}.
\end{cases}$$
\end{thm}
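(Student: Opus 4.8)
The plan is to prove matching upper and lower bounds for $\mu(2,3,v)$. For the \textbf{lower bound}, I would argue from a counting/parity obstruction. In any adesign $A(2,3,v)$, the $\binom{v}{2}$ pairwise frequencies are distinct nonnegative integers, so their sum is at least $0+1+\cdots+(\binom{v}{2}-1) = \binom{\binom{v}{2}}{2}$, while this sum also equals $3b$ where $b$ is the number of blocks (each block covers three pairs). Thus $3 \mid \sum_T f(T)$. If the maximum frequency $m$ were as small as $\binom{v}{2}-1$, the frequencies would have to be \emph{exactly} $\{0,1,\dots,\binom{v}{2}-1\}$, whose sum is $\binom{\binom{v}{2}}{2}$; one checks whether this is divisible by $3$. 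A short congruence computation shows $\binom{\binom{v}{2}}{2} \not\equiv 0 \pmod 3$ precisely when $v \equiv 2 \pmod 3$ (and the sporadic $v=4$ must be handled separately, presumably because a needed small ingredient fails to exist), forcing $m \ge \binom{v}{2}$ in those cases; otherwise the congruence permits $m = \binom{v}{2}-1$, and one must still rule out $m \le \binom{v}{2}-2$, which follows since then the frequencies would be a proper subset of $\{0,\dots,\binom{v}{2}-2\}$ with too small a sum to allow all $\binom{v}{2}$ of them distinct — indeed $\binom{v}{2}$ distinct nonnegative integers all $\le \binom{v}{2}-2$ is impossible. So the lower bound is really just pigeonhole plus one parity check.

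For the \textbf{upper bound}, the goal is a construction realizing frequency sets that are (almost) initial segments $\{0,1,\dots,\binom{v}{2}-1\}$ or $\{0,1,\dots,\binom{v}{2}-2\}$ with one value omitted. Following the excerpt's hints, I would use PBD closure (Lemma~\ref{pbd-cl}) together with group divisible designs and the ``anti-magic cube'' ingredients. Concretely: decompose $K_v$ (or a suitable GDD on $v$ points) into parts and small blocks; on each small block of size $n$ place an anti-magic configuration whose $3n^2$ triple-line-sums form the consecutive set $\{0,1,\dots,3n^2-1\}$ (these exist for $n \in \{2,3,5,7\}$ and products thereof, per the discussion), and translate/shift the frequency windows on successive blocks so that the union of all resulting pairwise frequencies is an initial segment of the integers. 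The accompanying ``spreading'' designs $S_\lambda(2,3,u_i)$ from the PBD-closure recursion are used to offset each block's frequency window by the right amount so consecutive windows abut without overlap. One must track carefully that the total length of all windows equals exactly $\binom{v}{2}$ (or $\binom{v}{2}-1$ with a gap), which is a bookkeeping identity since the windows partition the pair set.

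\textbf{The main obstacle} is the upper bound: making the PBD/GDD decomposition mesh \emph{exactly} with the available anti-magic cube sizes $\{2,3,5,7\}$ and their products, so that every block size arising in the decomposition admits a consecutive-line-sum configuration, while simultaneously arranging the additive offsets so the frequency windows tile $\{0,\dots,\binom{v}{2}-1\}$ with no collision and no wasted values. This is where the ``slightly technical argument'' lives: one needs a number-theoretic argument that, for each residue class of $v$ modulo $3$ (and large enough $v$), $K_v$ or an appropriate GDD can be built purely from blocks of sizes that are products of $2,3,5,7$ — likely via a Wilson-type asymptotic existence result for designs with block sizes restricted to that multiplicative set, combined with ad hoc handling of small $v$ and of the boundary cases $v=4$ and $v\equiv 2\pmod 3$ where the parity obstruction bites. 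Verifying the Chernoff-free, purely combinatorial offset arithmetic is routine once the decomposition is in hand; the existence of the decomposition with the right block sizes is the crux.
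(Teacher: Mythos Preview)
The paper does not prove this theorem at all: it is quoted from \cite{DSG} with only the remark that ``a slightly technical argument'' is involved, so there is no in-paper proof to compare against. That said, your lower-bound argument contains a genuine error that would block any proof along those lines.

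Your claim is that $\binom{\binom{v}{2}}{2}\not\equiv 0\pmod{3}$ exactly when $v\equiv 2\pmod{3}$. In fact this sum is \emph{always} divisible by $3$. Writing $N=\binom{v}{2}$, one has $N\equiv 0\pmod 3$ when $v\equiv 0,1\pmod 3$, and $N\equiv 1\pmod 3$ when $v\equiv 2\pmod 3$; in every case $3\mid N(N-1)$, hence $3\mid\binom{N}{2}$. (Concretely: $v=5$ gives $N=10$, $\binom{10}{2}=45$; $v=8$ gives $N=28$, $\binom{28}{2}=378$.) So the ``sum of pair-frequencies $\equiv 0\pmod 3$'' constraint never rules out the frequency set $\{0,1,\dots,\binom{v}{2}-1\}$, and your mechanism for pushing the lower bound up to $\binom{v}{2}$ when $v\equiv 2\pmod 3$ collapses. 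The pigeonhole part giving $\mu\ge\binom{v}{2}-1$ is fine, but the extra unit requires a different obstruction---for instance, one must also use that for each point $x$ the sum $\sum_{y\neq x} f(\{x,y\})=2r_x$ is even, and more delicately that the resulting edge-weighted $K_v$ must actually decompose into triangles, which is a structural (not merely arithmetic) condition. The case $v=4$ already shows the subtlety: there the pair frequencies are the six pairwise sums of four nonnegative integers, so hitting $\{0,\dots,5\}$ would force a Sidon set in $\{0,1,2,3\}$, which does not exist.

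Your upper-bound sketch is broadly consonant with what the paper says is done in \cite{DSG} (PBD/GDD ingredients plus anti-magic cubes with shifted frequency windows), but as you acknowledge it is only a plan; the paper gives no further detail to compare.
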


We omit further analysis of $\mu(2,k,v)$ for $k>3$ until better general constructions surface for $v$ small relative to $k$.  For fixed $k$, the complete determination of $\mu(2,k,v)$ can probably be reduced to a finite problem.  Quite possibly $\mu(2,k,v) = \binom{v}{2}-1+o(v)$ for each $k$.

\ack
The authors are indebted to a referee who suggested the probabilistic method, yielding the present form of Theorem~\ref{main-t}.

\end{document}